  \newcommand{\we}{\mathrel\sim}
  \newcommand{\op}{{\mathord\mathrm{op}}}
  \newcommand{\ob}{\operatorname{ob}}
  \newcommand{\push}[1]{\cup_{#1}}
  \newcommand{\colim}{\operatorname*{colim}}
  \newcommand{\slice}{\mathbin\downarrow}
  \newcommand{\Sk}{\operatorname{Sk}}
  \newcommand{\bd}{\partial}
  \newcommand{\id}[1][]{\operatorname{id}_{#1}}
  \newcommand{\cod}{\operatorname{cod}}
  \newcommand{\simp}[1]{\mathord\Delta^{#1}}
  \newcommand{\nat}{{\mathord\mathbb{N}}}
  \newcommand{\cat}[1]{\mathsf{#1}}
  \newcommand{\ncat}[1]{\mathsf{#1}}
  \newcommand{\sSet}{\ncat{sSet}}
  \newcommand{\from}{\colon}
  \newcommand{\ito}{\hookrightarrow}
  \newcommand{\weto}{\mathrel{\ensurestackMath{\stackon[-2pt]{\xrightarrow{\makebox[.8em]{}}}{\mathsmaller{\mathsmaller\we}}}}}
  \newcommand{\cto}{\rightarrowtail}
  \declaretheorem[style=definition,within=section]{definition}
  \declaretheorem[style=definition,numberlike=definition]{assumption}
  \declaretheorem[style=definition,numberlike=definition]{example}
  \declaretheorem[style=definition,numberlike=definition]{notation}
  \declaretheorem[style=definition,numberlike=definition]{remark}
  \declaretheorem[style=plain,numberlike=definition]{corollary}
  \declaretheorem[style=plain,numberlike=definition]{lemma}
  \declaretheorem[style=plain,numberlike=definition]{proposition}
  \declaretheorem[style=plain,numberlike=definition]{theorem}
  \Crefname{nparagraph}{Paragraph}{Paragraphs}
  \declaretheorem[style=plain,numbered=no,name=Theorem]{theorem*}
  \Crefname{corollary}{Corollary}{Corollaries}
  \Crefname{definition}{Definition}{Definitions}
  \Crefname{assumption}{Assumption}{Assumptions}
  \Crefname{lemma}{Lemma}{Lemmas}
  \Crefname{proposition}{Proposition}{Propositions}
  \Crefname{remark}{Remark}{Remarks}
  \Crefname{theorem}{Theorem}{Theorems}
  \newlist{axioms}{enumerate}{1}
  \Crefname{axiomsi}{}{}
  \newenvironment{tikzeq*}
  {
    \begingroup
    \begin{equation*}
    \begin{tikzpicture}[baseline=(current bounding box.center)]
  }
  {
    \end{tikzpicture}
    \end{equation*}
    \endgroup
    \ignorespacesafterend
  }
  \tikzset
  {
    diagram/.style=
    {
      matrix of math nodes,
      column sep={4.3em,between origins},
      row sep={4em,between origins},
      text height=1.5ex,
      text depth=.25ex
    },
    over/.style={preaction={draw=white,-,line width=6pt}},
    every to/.style={font=\footnotesize},
    inj/.style={right hook->},
    surj/.style={-{Latex[open]}},
    cof/.style={>->},
    fib/.style={->>},
  }
  \DeclareFontFamily{U}{mathx}{\hyphenchar\font45}
  \DeclareFontShape{U}{mathx}{m}{n}{
    <5> <6> <7> <8> <9> <10>
    <10.95> <12> <14.4> <17.28> <20.74> <24.88>
    mathx10}{}
  \DeclareSymbolFont{mathx}{U}{mathx}{m}{n}
  \DeclareFontFamily{U}{mathb}{\hyphenchar\font45}
  \DeclareFontShape{U}{mathb}{m}{n}{
    <5> <6> <7> <8> <9> <10>
    <10.95> <12> <14.4> <17.28> <20.74> <24.88>
    mathb10}{}
  \DeclareSymbolFont{mathb}{U}{mathb}{m}{n}
  \DeclareMathAccent{\widebar}{0}{mathx}{"73}
  \DeclareMathSymbol{\Rsh}{\mathrel}{mathb}{"E9}
  \DeclareFontFamily{U}{MnSymbolA}{}
  \DeclareFontShape{U}{MnSymbolA}{m}{n}{
    <-6> MnSymbolA5
    <6-7> MnSymbolA6
    <7-8> MnSymbolA7
    <8-9> MnSymbolA8
    <9-10> MnSymbolA9
    <10-12> MnSymbolA10
    <12-> MnSymbolA12}{}
  \DeclareSymbolFont{MnSyA}{U}{MnSymbolA}{m}{n}
  \DeclareMathSymbol{\twoheaddownarrow}{\mathrel}{MnSyA}{27}
  \newcommand{\MSC}[1]{%
    \let\thempfn\relax
    \footnotetext[0]{2020 Mathematics Subject Classification: #1.}
  }
\newcommand{\cSet}{\mathsf{cSet}}
\newcommand{\ccSet}{\mathsf{ccSet}}
\newcommand{\Set}{\mathsf{Set}}
\newcommand{\ssSet}{\mathsf{ssSet}}
\newcommand{\aSet}{\mathsf{aSet}} 
\newcommand{\rSet}{\mathsf{rSet}} 
\newcommand{\arSet}{\mathsf{arSet}} 
\newcommand{\raSet}{\mathsf{raSet}} 
\newcommand{\aaSet}{\mathsf{aaSet}} 
\newcommand{\diag}[1][\otimes]{d_{#1}} 
\newcommand{\diagonal}{\mathsf{diag}} 
\newcommand{\repr}[1]{\widehat{#1}} 
\newcommand{\extimes}{\mathbin{\underline{\times}}} 
\newcommand{\bi}[1]{\mathbf{#1}} 
\newcommand{\join}{\ast} 
\author{Daniel Carranza \and Krzysztof Kapulkin \and Liang Ze Wong}
\title{Diagonal Lemma for Presheaves on Elegant Reedy Categories}
\date{\today}
\begin{document}

  \maketitle

  \begin{abstract}
  The diagonal lemma asserts that if a map of bisimplicial sets is a levelwise weak equivalence in the Kan--Quillen model structure, then it induces a weak equivalence of the diagonal simplicial sets.
  In this paper, we observe that the standard proof of this fact works in greater generality, namely that of (elegant) Reedy categories.
    \MSC{18N40, 55U35 (primary), 18N45, 18N50 (secondary)}
  \end{abstract}

  \section*{Introduction}

The diagonal lemma is a fundamental result of simplicial homotopy theory \cite[Ch.~IV]{goerss-jardine}.
It states that a map of bisimplicial sets $f \from X \to Y$ that is a levelwise equivalence (i.e., $f_n \from X_n \to Y_n$ is a weak homotopy equivalence for every non-negative integer $n$) induces a weak homotopy equivalence on the diagonal simplicial sets $\diagonal f \from \diagonal X \to \diagonal Y$ (where $(\diagonal X)_m = X_{m, m}$).

The result was independently discovered by Bousfield and Kan \cite[Lems.~XII.4.2--3]{bousfield-kan}, Segal \cite[Prop.~A.1]{segal:cats-and-cohomology}, and Tornehave (cf.~\cite[Rem.~3.14]{latch-thomason-wilson}).
Newer accounts include the seminal text of Goerss and Jardine \cite[Prop.~IV.1.9]{goerss-jardine} and a constructive proof of the Kan--Quillen model structure due to Gambino, Sattler, and Szumi{\l}o \cite[Prop.~2.3.5]{gambino-sattler-szumilo}.

In particular, the proof presented in \cite{gambino-sattler-szumilo} generalizes straightforwardly to other settings in several ways.
The first generalization is abstracting away the notion of weak equivalence in that instead of working with weak homotopy equivalences, one might, for example, consider weak categorical equivalences of the Joyal model structure.
The second generalization has to do with the indexing category --- instead of bisimplicial sets, i.e., functors $\Delta^\op \times \Delta^\op \to \Set$, one might consider more general functors $A^\op \times A^\op \to \Set$ or even $A^\op \times R^\op \to \Set$, where $A$ and $R$ are sufficiently nice categories, for example, (elegant) Reedy.
Moving to the case of $A^\op \times R^\op \to \Set$, one is forced to rethink what it means to be a `diagonal' functor, and the requisite properties of such a functor can be axiomatized.
Putting all these generalizations together, we prove the following version of the usual diagonal lemma.

\begin{theorem*}[cf.~\cref{thm:gen-diag-lem}]
    Suppose $A$ and $R$ are Reedy categories and consider $\Set^{A^\op}$ with a cofibration category structure whose cofibrations are the monomorphisms.
    Let $f \from X \to Y$ be a morphism in $\Set^{A^\op \times R^\op}$ between Reedy cofibrant diagrams such that $f_r \from X_r \to Y_r$ is a weak equivalence in $\Set^{A^\op}$ for all $r \in R$.
    If $\diag$ is a diagonal functor in the sense of \cref{def:diag-fun}, then $\diag f \from \diag X \to \diag Y$ is a weak equivalence.
\end{theorem*}

Examples of applications of this statement abound and we give several in \cref{sec:examples}.
In particular, in subsequent joint work with Lindsey, the first two authors used the above theorem in the context of the Joyal model structure \cite{carranza-kapulkin-lindsey:clf}.
Although in all of these examples, we consider the case $A = R$, the proof is perhaps the cleanest when considered in the more general form stated above.

One can consider further generalizations of the above statement.
One such generalization would be a weakening of the Reedy condition to allow objects to have non-trivial automorphisms.
Another possibility would be to replace $\Set^{A^\op}$ with an arbitrary (Grothendieck) topos.
Although both of these seem plausible, the proof techniques used here do not apply to them, and hence such generalizations could be a subject of future work.

The paper is organized as follows.
In \cref{sec:prelim}, we collect the necessary background on Reedy categories and the homotopical structure of presheaves thereon, which we use as a generalization of the simplex category $\Delta$.
Then in \cref{sec:diag-lem}, we prove the Generalized Diagonal Lemma (\cref{thm:gen-diag-lem}) before giving several examples of interest in \cref{sec:examples}.

\textbf{Acknowledgement.}
We thank the anonymous referee for numerous suggestions that helped improve the quality of the paper, including the generalization from $\Set^{A^\op \times A^\op}$ to $\Set^{A^\op \times R^\op}$, which in turn let us identify the necessary assumptions on $A$ and $R$.

This material is based upon work supported by the National Science Foundation under Grant No.~DMS-1928930 while the first two authors participated in a program hosted by the Mathematical Sciences Research Institute in Berkeley, California, during the 2022--23 academic year.

  \section{Preliminaries} \label{sec:prelim}


In this section, we collect the necessary background on Reedy theory and its extensions, for which an excellent survey is \cite{campion:cubical-sites}.
We begin by recalling the definition of a Reedy category.

\begin{definition}
    A \emph{Reedy category} is a category $R$ with a function $\deg \from \ob R \to \nat$ and two wide subcategories $R_-$ and $R_+$ of $R$ such that:
    \begin{enumerate}
        \item If a non-identity map $r \to s$ is in $R_-$, then $\deg r > \deg s$; if a non-identity map $r \to s$ is in $R_+$, then $\deg r < \deg s$.
        \item For any morphism $\varphi \in R$, there are unique morphisms $\varphi_- \in R_-$ and $\varphi_+ \in R_+$ such that $\varphi = \varphi_+ \varphi_-$.
    \end{enumerate}
\end{definition}
Note that conditions (1) and (2) imply that $R$ has no non-identity isomorphisms.

\begin{example} \label{ex:delta-box-reedy}
  The simplex category $\Delta$ and several variants of the box category $\Box$ (e.g., with or without connections) are Reedy categories (cf.~\cite[Cor.~1.17]{doherty-kapulkin-lindsey-sattler}).
\end{example}

\begin{example} \label{ex:direct-inverse-reedy}
  A category $I$ is \emph{direct} (respectively, \emph{inverse}) if there exists a function $\deg \colon \ob I \to \nat$ such that for every non-identity morphism $i \to j$ in $I$, we have $\deg i < \deg j$ (respectively, $\deg i > \deg j$).
  With these definitions, every direct or inverse category is a Reedy category.
  Moreover, for any Reedy category $R$, the subcategory $R_-$ is inverse and the subcategory $R_+$ is direct.
\end{example}




  



We fix two Reedy categories $A$ and $R$.
We shall use the small letters $a$, $b$, $c$, \ldots and $r$, $s$, $t$, \dots to indicate objects of $A$ and $R$, respectively.
The objects of $A \times R$ are denoted in bold as $\bi{ar} = (a, r)$ or $\bi{bs} = (b, s)$.

The category of presheaves on $A$, i.e., contravariant functors $A^\op \to \Set$ is denoted $\aSet$, and the category of presheaves on $R$ is similarly denoted $\rSet$.
The category of presheaves on $A \times R$ is denoted $\arSet$.

We shall use capital letters $K$, $L$, \ldots to denote presheaves on $A$ or $R$, and letters $X$, $Y$, \ldots to denote presheaves on $A \times R$.
Representable presheaves $A(-, a)$ or $R(-, r)$ represented by $a$ or $r$ are denoted $\repr{a}$ and $\repr{r}$, respectively.
A representable presheaf $A \times R (-, \bi{ar})$ is denoted $\repr{\bi{ar}}$.

For $K \in \aSet$ and $a \in A$, we write $K_a$ for the set $K(a)$.
For $x \in K_a$ and $\varphi \from b \to a$ in $A$, we write $x\varphi \in K_b$ for the application of the function $K(\varphi) \from K_a \to K_b$ to the element $x$.

We will see that a version of the diagonal lemma holds for presheaves on arbitrary Reedy categories (\cref{thm:gen-diag-lem}).
However, an important technical assumption (namely, Reedy cofibrancy) is simplified (\cref{cor:}) by working with \emph{elegant} Reedy categories, a notion due to Bergner and Rezk \cite{bergner-rezk}.
To state it, we need a preliminary definition.
\begin{definition}
  Let $K$ be a presheaf on $R$ and $r$ an object of $K$.
  An element $x \in K_r$ is \emph{degenerate} if there is a non-identity $\sigma \colon r \to s$ in $R_-$ and $y \in K_s$ such that $x = y \sigma$; it is \emph{non-degenerate} if it is not degenerate.
\end{definition}


\begin{definition} \label{lem:ez}
  A Reedy category $R$ is \emph{elegant} if for any presheaf $K \in \rSet$ and any element $x \in K_r$, there is a unique map $\sigma \colon r \to s$ in $R_-$ and a unique non-degenerate element $y \in K_s$ such that $x = y \sigma$.
\end{definition} 

Just like CW complexes, presheaves on Reedy categories have a notion of \emph{skeleta}, which we now define.
For $n \geq -1$, let $R_{\leq n}$ denote the full subcategory of the EZ category $R$ spanned by objects $r$ with $\deg r \leq n$.
(In particular, $R_{\leq -1}$ is the empty category.)
The inclusion $i_n \colon R_{\leq n} \ito R$ induces adjoint triples
\[
  \begin{tikzcd}[column sep=huge]
    \Set^{R_{\leq n}^\op} 
      \ar[r, bend left, "(i_n)_!" above]
      \ar[r, bend right, "(i_n)_*" below]
    & \rSet 
      \ar[l,"i_n^*" description]
    & \aSet^{R_{\leq n}^\op} 
      \ar[r, bend left, "(\id \times i_n)_!" above]
      \ar[r, bend right, "(\id \times i_n)_*" below]
    & \arSet 
      \ar[l,"(\id \times i_n)^*" description]
  \end{tikzcd}
\]

\begin{definition} \leavevmode
\begin{enumerate}
  \item For $n \geq -1$, the \emph{$n$-skeleton} of a presheaf $K \in \rSet$ is the presheaf $\Sk^n K = (i_n)_! i_n^* K$.
  \item For $n \geq -1$, the \emph{$n$-skeleton} of a presheaf $X \in \arSet$ is the presheaf $\Sk^n X = (\id \times i_n)_! (\id \times i_n)^* X$.
\end{enumerate}
\end{definition}
\begin{notation}
For $r \in R$ such that $\deg(r) = n$, we write $\bd \repr{r}$ for the $(n-1)$-skeleton $\Sk^{n-1} \repr{r}$ of $\repr{r}$.
\end{notation}

Recall that every presheaf $K \in \rSet$ is a colimit of representables $K \cong \colim\limits_{(r, x) \in \int_R K} \repr{r}$, 
where $\int_R K$ denotes the category of elements of $K$.
The following result adapts this colimit to give a description of the $n$-skeleton of a presheaf.
\begin{proposition} \label{skeleton-colim-repr}
  Given $n \geq -1$,
  \begin{enumerate}
    \item for any $K \in \rSet$, there is an isomorphism
    \[ \Sk^n K \cong \colim\limits_{\substack{(r, x) \in \int_R K \\ \deg(r) \leq n}} \repr{r} \]
    natural in $K$.
    \item for any $X \in \arSet$, there is an isomorphism
    \[ \Sk^n X \cong \colim\limits_{\substack{(a, r, x) \in \int_{A \times R} X \\ \deg(r) \leq n}} \repr{\bi{ar}} \]
    natural in $X$.
  \end{enumerate}
\end{proposition}
\begin{proof}
  For (1), the category of elements $\int\limits_{R_{\leq n}} \! i_n^* K$ is isomorphic to the full subcategory of $\int\limits_{R}\! K$ consisting of objects $(r \in R, x \in K_r)$ such that $\deg(r) \leq n$.
  By writing $i_n^* K$ as a colimit of representables, this gives an isomorphism 
  \begin{align*} 
    \Sk^n K &= (i_n)_! i_n^* K \\
    &\cong (i_n)_! \big( \colim\limits_{(r, x) \in \int_{R_{\leq n}} \! i_n^* K} \repr{r} \, \big) \\
    &\cong (i_n)_! \big( \colim\limits_{\substack{(r, x) \in \int_R \! K \\ \deg(r) \leq n}} \repr{r} \, \big) \\
    &\cong \colim\limits_{\substack{(r, x) \in \int_R \! K \\ \deg(r) \leq n}} (i_n)_! \repr{r} \\
    &\cong \colim\limits_{\substack{(r, x) \in \int_R \! K \\ \deg(r) \leq n}} \repr{r}.
  \end{align*}

  The argument for (2) is analogous.
\end{proof}
From this description, we deduce the usual colimit sequence of ``skeletal induction''.
\begin{corollary} \label{skeletal-induction}
  \leavevmode
  \begin{enumerate}
    \item For $K \in \rSet$, there is a natural map $\Sk^m K \to \Sk^{n} K$ whenever $m < n$.
    These maps form a diagram 
    \[ \Sk^{-1} K \to \Sk^0 K \to \Sk^1 K \to \cdots \]
    from $\mathbb{N} \to \rSet$, and $K$ is the colimit of this seqeuence.

    \item For $X \in \arSet$, there is a natural map $\Sk^m X \to \Sk^{n} X$ whenever $m < n$.
    These maps form a diagram 
    \[ \Sk^{-1} X \to \Sk^0 X \to \Sk^1 X \to \cdots \]
    from $\mathbb{N} \to \arSet$, and $X$ is the colimit of this seqeuence. \qed
  \end{enumerate}
\end{corollary}
  
  


The $n$-skeleton of a representable presheaf has a convenient explicit description.
\begin{proposition} \label{repr-skeleton-description}
  Let $n \geq -1$.
  \begin{enumerate}
    \item For $r, s \in R$, we have a bijection 
    \[ (\Sk^n \repr{s})_r \cong \{ f \from r \to s \mid \text{$f$ factors through an object of degree $\leq n$} \} \]
    natural in $r$ and $s$.
    In particular, 
    \[ (\bd \repr{s})_r \cong \{ f \from r \to s \mid f_+ \neq \id \}. \]
    \item For $a, b \in A$ and $r, s \in R$, we have a bijection
    \[ (\Sk^n \repr{\bi{bs}})_{a, r} \cong A(a, b) \times \{ f \from r \to s \mid \text{$f$ factors through an object of degree $\leq n$} \} \]
    natural in $a$, $r$, and $\bi{bs}$.
  \end{enumerate}
\end{proposition}
\begin{proof}
  Item (1) is \cite[Lem.~3.17 \& Obs.~3.18]{riehl-verity:reedy-categories}.
  For item (2), we apply \cref{skeleton-colim-repr} to obtain an isomorphism
  \[ (\Sk^n \repr{\bi{bs}}) \cong \colim\limits_{\substack{(c, t, \varphi, \psi) \in \int_{A \times R} \! \repr{\bi{bs}} \\ \deg(t) \leq n}} \repr{\bi{ct}}. \]
  The full subcategory of $\int\limits_{A \times R} \! \repr{\bi{bs}}$ spanned by pairs $(c, t, \varphi, \psi)$ satisfying $\deg(t) \leq n$ is isomorphic to the product category $\int\limits_{A} \repr{b} \times \int\limits_{R_{\leq n}}\! \repr{s}$.
  Thus,
  \begin{align*}
    (\Sk^n \repr{\bi{bs}})_{a, r} &\cong \big( \! \colim\limits_{\substack{(c, \varphi) \in \int_A \repr{b} \\ (t, \psi) \in \int_{R \leq n}\! \repr{s}}} \repr{\bi{ct}} \ \big)_{a, r} \\
    &\cong \colim\limits_{\substack{(c, \varphi) \in \int_A \repr{b} \\ (t, \psi) \in \int_{R \leq n}\! \repr{s}}} (\repr{\bi{ct}})_{a, r} \\
    &\cong \colim\limits_{\substack{(c, \varphi) \in \int_A \repr{b} \\ (t, \psi) \in \int_{R \leq n}\! \repr{s}}} \big( A(a, c) \times R(r, t) \big) \\
    &\cong \colim\limits_{(c, \varphi) \in \int_A \repr{b}} A(a, c) \, \times \, \colim\limits_{(t, \psi) \in \int_{R_{\leq n}}\! \repr{s}} \! R(r, t) \\
    &\cong A(a, b) \times (\Sk^n \repr{s})_r,
  \end{align*}
  from which the result follows by item (1).
\end{proof}
In light of \cref{repr-skeleton-description}, we view $\Sk^n \repr{s}$ and $\Sk^n \repr{\bi{bs}}$ as subobjects of $\repr{s}$ and $\repr{\bi{bs}}$, respectively.

We now extend our considerations from purely category-theoretic notions to include some `homotopical' structure.
As suggested by \cref{ex:delta-box-reedy}, Reedy categories are `nice shape categories' for diagrams taking values in a category with such homotopical structure.
A typical target category could be a model category, but for our purposes a cofibration category (cf.~\cite{brown,radulescu-banu,szumilo}) is sufficient.

\begin{definition}\label{cofib-cat-def}
  A \emph{cofibration category} consists of a category $\cat{C}$ together with two wide subcategories: of \emph{cofibrations}, denoted $\cto$, and of \emph{weak equivalences}, denoted $\weto$, subject to the following conditions (where by an \emph{acyclic cofibration} we mean a morphism that is both a cofibration and a weak equivalence):
  \begin{enumerate}
      \item Weak equivalences satisfy 2-out-of-3.
      \item The category $\cat{C}$ has an initial object $\varnothing$ and for any object $X \in C$, the unique map $\varnothing \to X$ is a cofibration (i.e., all objects are \emph{cofibrant}).
      \item For any object $X \in \cat{C}$, the codiagonal map $X \sqcup X \to X$ can be factored as a cofibration followed by a weak equivalence.
      \item The category $\cat{C}$ admits pushouts along cofibrations.
      Moreover, the pushout of an (acyclic) cofibration is an (acyclic) cofibration.
      \item The category $\cat{C}$ admits (small) coproducts; if $\{ f_i \from X_i \to Y_i \}_{i \in I}$ is a collection of (acyclic) cofibrations then $\coprod f_i \from \coprod X_i \to \coprod Y_i$ is an (acyclic) cofibration.
      \item Given a countable sequence of composable (acyclic) cofibrations $X_1 \to X_1 \to X_2 \to \dots$, the colimit $\colim X_i$ exists and the cone maps $X_i \to \colim X_i$ are (acylic) cofibrations.
  \end{enumerate}
\end{definition}

The following example shows in what way cofibration categories generalize model categories.

\begin{example}
  Given a model category $\cat{M}$, its full subcategory of cofibrant objects forms a cofibration category.
\end{example}

\begin{definition}
  Let $R$ be a Reedy category, $\cat{C}$ a cofibration category, and $X \colon R^\op \to \cat{C}$ a diagram.
  \begin{enumerate}
    \item The \emph{latching category} $\bd(r \slice R_-)$ of $r \in R$ is the full subcategory of the slice category $r \slice R_-$ spanned by all non-identity morphisms $r \to s$ in $R_-$.
    \item The \emph{latching object} of $X$ at $r$ is
    \[ L_r X := \colim \left( \bd(r \slice R_-)^\op \to R^\op \xrightarrow{X} \cat{C}  \right) \]
    \item The diagram $X$ is \emph{Reedy cofibrant} if for every $r \in R$, the induced map $L_r X \to X_r$ is a cofibration in $\cat{C}$.
  \end{enumerate}
\end{definition}
When we say a presheaf $X \in \arSet$ is Reedy cofibrant, we will always mean it is Reedy cofibrant when viewed as a diagram $R^\op \to \aSet$.

An advantage of Reedy cofibrant diagrams is that their colimits are homotopy colimits if the indexing category is an inverse category.
\begin{proposition}[{\cite[Thm.~9.3.5.(1c)]{radulescu-banu}}] \label{reedy-cofib-colim-weq}
  If $R$ is an inverse category (i.e.\ $R^\op$ is a direct category) then a pointwise weak equivalence $f \from X \to Y$ between Reedy cofibrant diagrams $X, Y \from R^\op \to \cat{C}$ induces a weak equivalence
  \[ \colim f \from \colim X \to \colim Y. \tag*{\qedsymbol} \]
\end{proposition}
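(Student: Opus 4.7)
The plan is an induction on the Reedy skeleton filtration. For a Reedy cofibrant diagram $X \from R^\op \to \cat{C}$, let $F_n X$ denote the colimit of the restriction of $X$ to the full subcategory $R^\op_{\leq n}$ on objects of degree at most $n$. Standard Reedy theory identifies $\colim_R X$ with the sequential colimit $\colim_n F_n X$ along transition maps $F_{n-1} X \cto F_n X$ which are cofibrations; since weak equivalences in a cofibration category are closed under transfinite composition of cofibrations between cofibrant objects, it suffices to show by induction on $n$ that each $F_n f \from F_n X \to F_n Y$ is a weak equivalence.

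The base case $n = -1$ is trivial as $F_{-1} X = \varnothing = F_{-1} Y$. For the inductive step, the cell-attachment presentation of $F_n X$ yields a pushout square
\[
  \begin{tikzcd}[column sep=large]
    \coprod_{r \colon \deg r = n} L_r X \ar[r] \ar[d, cof] & F_{n-1} X \ar[d, cof] \\
    \coprod_{r \colon \deg r = n} X_r \ar[r] & F_n X
  \end{tikzcd}
\]
whose left vertical map is a cofibration, since each $L_r X \cto X_r$ is (by Reedy cofibrancy) and coproducts of cofibrations between cofibrant objects are cofibrations. The analogous pushout presents $F_n Y$, and $f$ induces a comparison between the two squares. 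The outer inductive hypothesis gives that $F_{n-1} f$ is a weak equivalence; the assumption on $f$ gives that each $X_r \to Y_r$ is a weak equivalence; and a nested induction applied to the latching diagrams (the slice $\bd(r \slice R_-)^\op$ inherits a Reedy structure with strictly smaller degrees, and the restricted diagrams remain Reedy cofibrant) shows that each $L_r f$ is a weak equivalence. The gluing lemma in a cofibration category then yields that $F_n f$ is a weak equivalence, closing the induction.

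I expect the main obstacle to be establishing the cell-attachment pushout and verifying that the latching diagrams are themselves Reedy cofibrant, which is the standard technical heart of Reedy theory. Once these are in hand, the result follows by combining the gluing lemma with the closure of weak equivalences under transfinite composition of cofibrations; both are elementary consequences of the axioms of a cofibration category as stated in \cref{cofib-cat-def}.
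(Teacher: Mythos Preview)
The paper does not give a proof of this proposition; it is stated with a bare citation to \cite[Thm.~9.3.5.(1c)]{radulescu-banu} and no argument. Your sketch via the skeletal filtration, cell-attachment pushouts, and the gluing lemma is precisely the standard argument (and is in essence what the cited reference does), so at the level of the present paper there is nothing to compare.

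One point worth flagging: the cofibration-category axioms in \cref{cofib-cat-def} guarantee only an initial object and pushouts along cofibrations, not infinite coproducts or sequential colimits. Your cell-attachment square $\coprod_{\deg r = n} L_r X \cto \coprod_{\deg r = n} X_r$ and the identification $\colim X \cong \colim_n F_n X$ therefore tacitly assume more colimits than the stated axioms supply, and the closure of weak equivalences under sequential colimits of cofibrations is not quite an ``elementary consequence'' of \cref{cofib-cat-def} alone. In the paper's actual applications---where $\cat{C}$ is a presheaf topos and $R$ is a latching category $\bd(a \slice A_-)$ or $\nat$---all the needed colimits exist and your argument goes through verbatim; in full generality one must either add cocompleteness hypotheses or argue more delicately, as the cited source does.
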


Recall that a \emph{Cisinski model category} is a model structure on a topos in which cofibrations are the monomorphisms.
The Kan--Quillen model structure on the category of simplicial sets, as well as the Grothendieck model structure on the category of cubical sets, are examples of Cisinski model categories \cite[Prop.~2.1.5 \& Thm.~8.4.38]{cisinski:presheaves}.

We are however interested in working with a weaker structure, namely that of a cofibration category, which motivates the following definition.

\begin{definition} \label{def:cis-cof-cat}
  A \emph{Cisinski cofibration category} is a cofibration category structure on a (Grothendieck) topos in which cofibrations are the monomorphisms.
\end{definition}

\begin{example}
  Since all objects of a Cisinski model category are cofibrant, every Cisinski model structure has an underlying Cisinski cofibration category structure.
\end{example}

Working in the generality of Cisinski cofibration categories means that one can use the Generalized Diagonal Lemma to construct model structures on presheaf categories.
This recovers a useful application of the original diagonal lemma (e.g.\ \cite{gambino-sattler-szumilo}).

The key advantage of working with both elegant Reedy categories and Cisinski cofibration categories is Reedy cofibrancy.
\begin{lemma} \label{elegant-reedy-cofibrant}
  Let $\aSet$ be equipped with a Cisinski cofibration category structure and $R$ be an elegant Reedy category.
  Then any diagram $R^\op \to \aSet$ is Reedy cofibrant.
\end{lemma}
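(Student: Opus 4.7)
The plan is to reduce the statement to the set-valued case and then invoke the EZ axiom together with \cref{lem:ez}, which is precisely the content of \cite[Prop.~3.14]{bergner-rezk}.

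Since the cofibration category structure on $\aSet$ is Cisinski, cofibrations are exactly the monomorphisms. Both monomorphisms and colimits in the presheaf category $\aSet$ are detected pointwise at each $a \in A$, so for a diagram $X \from B^\op \to \aSet$ the latching map $L_b X \to X_b$ is a monomorphism of presheaves if and only if, for every $a \in A$, its evaluation at $a$ is an injection of sets. This evaluation is canonically identified with the latching map at $b$ of the $\Set$-valued diagram $X(-)(a) \from B^\op \to \Set$. It therefore suffices to prove the lemma for $\Set$-valued diagrams on $B^\op$.

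In that case, the latching object unfolds as $L_b Y = \colim_{(\sigma \from b \to c) \in \bd(b \slice B_-)^\op} Y_c$; an element is represented by a pair $(\sigma, y)$ with $\sigma$ a non-identity arrow in $B_-$ and $y \in Y_c$, and the latching map sends it to $y\sigma \in Y_b$. Given two representatives $(\sigma, y)$ and $(\sigma', y')$ with $y\sigma = y'\sigma'$, one must exhibit a zigzag in $\bd(b \slice B_-)$ identifying them. I would first apply \cref{lem:ez} to reduce to the case where $y$ and $y'$ are non-degenerate; the uniqueness clause of \cref{lem:ez} applied to the element $y\sigma = y'\sigma' \in Y_b$ then forces $y = y'$ and shows that $\sigma$ and $\sigma'$ share a common codomain.

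The main obstacle is the remaining step: identifying $(\sigma, y)$ with $(\sigma', y)$ in the colimit when $y\sigma = y\sigma'$. This is exactly where the EZ axiom beyond plain Reedy-ness enters: using that every morphism in $B_-$ admits a section, and that morphisms in $B_-$ are determined by their sets of sections, one can produce a morphism in $b \slice B_-$ between $\sigma$ and $\sigma'$ (or, more generally, a zigzag through a common refinement) which witnesses the identification. The bookkeeping for this step is precisely the argument given in the proof of \cite[Prop.~3.14]{bergner-rezk}, which I would cite rather than reproduce.
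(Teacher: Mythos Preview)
Your proposal is correct and matches the paper's approach, which is simply to cite \cite[Prop.~3.14]{bergner-rezk} without further argument; your sketch is effectively an unpacking of that citation via the pointwise reduction to $\Set$-valued diagrams.

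One small over-complication: once you have reduced to $(\sigma,y)$ and $(\sigma',y')$ with $y,y'$ non-degenerate and $y\sigma = y'\sigma'$, the uniqueness clause of \cref{lem:ez} gives not only $y = y'$ but also $\sigma = \sigma'$ outright, since the lemma asserts uniqueness of the degeneracy map as well as of the non-degenerate element. So the ``main obstacle'' you describe in your final paragraph does not actually arise, and the appeal to the section-determination axiom is unnecessary at that point (it is of course used inside the proof of \cref{lem:ez} itself).
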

\begin{proof}
  We use \cite[Prop.~3.14]{bergner-rezk}, which shows that if $R$ is an elegant Reedy category and $K \in \rSet$ is a presheaf then the latching map $L_r K \to K$ is a monomorphism for all $b$.
  For a diagram $X \in \arSet$, applying \cite[Prop.~3.14]{bergner-rezk} to $X_a \in \rSet$ gives that $L_r X_a \to X_{a, r}$ is a monomorphism for all $a$ and $r$.
  We have a natural isomorphism
  \begin{align*} 
    L_r (X_a) &= \colim\limits_{f \in \bd(r \slice R_-)} X_{a, {\cod (f)}} \\
    &\cong \left( \colim\limits_{f \in \bd(r \slice R_-)} (X_{\cod (f)}) \right)_a \\
    &= (L_r X)_a.
  \end{align*}
  Thus, $(L_r X)_a \to X_{a, r}$ is a monomorphism for all $a$, hence $L_r X \to X_r$ is a monomorphism.
  As monomorphisms are cofibrations in $\aSet$, the diagram $X \from R^\op \to \aSet$ is Reedy cofibrant.
\end{proof}

  \section{Generalized Diagonal Lemma} \label{sec:diag-lem}

The goal of this section is to state and prove the Generalized Diagonal Lemma, which we do in \cref{thm:gen-diag-lem}.
We begin however by stating our global assumption.

\begin{assumption} \label{A-ez-aset-Cisinski}
  Throughout the remainder of the paper, $A$ and $R$ will be Reedy categories, and $\aSet$ will always be considered with a Cisinski cofibration category structure.
\end{assumption}

As indicated in the introduction, the diagonal lemma can be generalized to other diagonal-like functors.
In order to spell out the requisite properties of such a functor, let us first recall the notion of the \emph{external product} of presheaves --- it is a functor $\extimes \from \aSet \times \rSet \to \arSet$ given by $(K \extimes L)_{a, r} = K_a \times L_r$.

\begin{definition} \label{def:diag-fun}
    A functor $\diag \from \arSet \to \aSet$ is \emph{(left) diagonal} if
    \begin{itemize}
        \item $\diag$ preserves colimits;
        \item $\diag$ preserves monomorphisms; and
        \item for any $K \in \rSet$, the composite 
        \[ \aSet \xrightarrow{\extimes K} \arSet \xrightarrow{\diag} \aSet \]
        preserves weak equivalences.
    \end{itemize}
\end{definition}
There is a notion of \emph{right diagonal functor} for functors $\raSet \to \aSet$ (where $\raSet$ denotes the category of presheaves on $R \times A$).
A functor $\diag \from \raSet \to \aSet$ is right diagonal if and only if the composite
\[ \arSet \xrightarrow{\simeq} \raSet \xrightarrow{\diag} \aSet \]
is left diagonal.
Thus, pre-composition by the equivalence $\arSet \simeq \raSet$ gives a bijection between right diagonal functors on $\raSet$ and left diagonal functors on $\arSet$ (ignoring size issues).
All statements that we make for left diagonal functors (in particular, the Generalized Diagonal Lemma \ref{thm:gen-diag-lem}) will immediately have analogues for right diagonal functors, hence we treat only the case of left diagonal functors and refer to them as simply \emph{diagonal functors}.
\begin{remark}
    If $\diag \from \arSet \to \aSet$ is a diagonal functor then, for any $K \in \rSet$, the functor 
    \[ \diag(- \extimes K) \from \aSet \to \aSet \] 
    is an exact functor between cofibration categories \cite[Def.~1.2]{szumilo}, though this will not play a role in the remainder of the paper.
\end{remark}

Given a diagonal functor $\diag \colon \arSet \to \aSet$, we write $\otimes$ for the composite $\diag \circ \extimes$.
The choice of notation here is meant to be suggestive, as many examples in practice occur in the case when $A = R$ and $\diag$ arises from a monoidal structure $\otimes$ on the category $\aSet$ (cf.~\cref{sec:examples} and \cref{ex:cisinski-monoidal-diag-weq}).

\begin{proposition} \label{birepr-compute}
    Let $\bi{bs} = (b, s)$ be an object in $A \times R$.
    For $a \in A$ and $r \in R$, we have an isomorphism
    \[ (L_r \repr{\bi{bs}})_a \cong A(a, b) \times \{ f \from r \to s \mid f_{-} \neq \id \}, \]
    natural in $\bi{bs}$.
\end{proposition}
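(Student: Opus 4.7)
The plan is to unfold the definition of $L_b \repr{\bi{a}}$ and compute the resulting colimit directly. Viewing $\repr{\bi{a}}$ as a diagram $A^\op \to \aSet$ via $b' \mapsto \repr{\bi{a}}(-, b') = \repr{a} \times \underline{A(b', a')}$, where $\underline{A(b', a')}$ denotes the constant presheaf on $A$ with value $A(b', a')$, we obtain
\[ (L_b \repr{\bi{a}})_c \;=\; \colim_{(b \xrightarrow{\sigma} b') \in \bd(b \slice A_-)^\op} A(c, a) \times A(b', a'). \]
Since $A(c, a)$ is independent of $b'$ and $\Set$ is cartesian closed, the factor $A(c, a)$ pulls out of the colimit. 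The proof therefore reduces to exhibiting a natural bijection
\[ \colim_{(b \xrightarrow{\sigma} b') \in \bd(b \slice A_-)^\op} A(b', a') \;\cong\; \{f \from b \to a' \mid f_- \neq \id\}. \]

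I would define the candidate bijection $\Phi$ from left to right by $[\sigma, g] \mapsto g\sigma$. This is well-defined on the colimit because the generating relation $(\sigma_1, g) \sim (\sigma_2, g\tau)$ for $\tau \from \cod(\sigma_2) \to \cod(\sigma_1)$ in $A_-$ with $\tau \sigma_2 = \sigma_1$ produces the same composite $g\sigma_1 = (g\tau)\sigma_2$. That the image lies in the claimed subset follows from uniqueness of Reedy factorization: writing $g = g_+ g_-$, the equality $g\sigma = g_+ \cdot (g_- \sigma)$ is itself a Reedy factorization, so $(g\sigma)_- = g_- \sigma$; as $\sigma \in A_-$ is non-identity, $\deg(\cod(g_- \sigma)) \leq \deg(\cod \sigma) < \deg(b)$, forcing $(g\sigma)_- \neq \id_b$.

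Surjectivity is immediate: given $f$ with $f_- \neq \id$, its Reedy factorization $f = f_+ f_-$ produces a representative $[f_-, f_+]$ mapping to $f$. For injectivity, suppose $g_1\sigma_1 = g_2\sigma_2 = f$. The same Reedy-factorization argument identifies $(g_i)_- \sigma_i = f_-$ and $(g_i)_+ = f_+$, so $(g_i)_- \from \cod(\sigma_i) \to \cod(f_-)$ is a morphism in $\bd(b \slice A_-)$ witnessing the colimit relation $(\sigma_i, g_i) \sim (f_-, f_+)$ (since $f_+ \cdot (g_i)_- = g_i$). Hence $[\sigma_1, g_1] = [f_-, f_+] = [\sigma_2, g_2]$.

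For naturality in $\bi{a}$, a morphism $(\phi, \phi') \from (a, a') \to (a_2, a_2')$ transports, under $\Phi$, to the map $(x, f) \mapsto (\phi x, \phi' f)$. The only nontrivial verification is that $(\phi' f)_- \neq \id$ whenever $f_- \neq \id$; this follows by applying the same style of degree estimate to the Reedy factorization of $\phi' f_+$, which yields $(\phi' f)_- = (\phi' f_+)_- \cdot f_-$ with codomain of degree strictly less than $\deg(b)$. The main obstacle is the identification of the colimit in the reduction step; once the Reedy-factorization bookkeeping is in place, well-definedness, surjectivity, injectivity, and naturality all fall out essentially formally.
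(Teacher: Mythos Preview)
Your proof is correct and follows essentially the same route as the paper: unfold the latching object, pull the constant factor $A(c,a)$ out of the colimit, and identify the remaining colimit with the set of maps $b \to a'$ having nontrivial $A_-$-part. The only difference is one of detail: the paper dispatches the final identification in one line by noting that the comparison map $(\sigma,g)\mapsto g\sigma$ into $A(b,a')$ is injective because every morphism in $A_-$ is a (split) epimorphism, whereas you spell out well-definedness, image, surjectivity, injectivity, and naturality explicitly via Reedy factorizations.
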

Note the condition $f_- \neq \id$ is equivalent to the condition that $f \not\in R_+$.

\begin{proof}
    We first compute
    \[ \begin{array}{r@{\; }l@{\qquad}l}
        (L_r \repr{\bi{bs}})_a & \cong \colim\limits_{g \in \bd(r \slice R_{-})} (\repr{\bi{bs}}_{\cod(g)})_a \\
        & = \colim\limits_{g \in \bd(r \slice R_{-})} \big( A(a, b) \times R(\cod(g), s) \big) \\
        & \cong A(a, b) \times \colim\limits_{g \in \bd(r \slice R_{-})} R(\cod(g), s).
    \end{array} \]
    It remains only to construct a bijection
    \[ \colim\limits_{g \in \bd(r \slice R_{-})} R(\cod(g), s) \cong \{ f \from r \to s \mid f_- \neq \id \} \]
    natural in $s$ (since the above computation is natural in $b$).

    The colimit in question admits an explicit description
    \[ \colim\limits_{g \in \bd(r \slice R_{-})} R(\cod(g), s) \cong \Bigg\{ (h, g) \Bigg| \begin{array}{l}
        g \from r \to r', \\
        g \in R_{-} \setminus \{ \id \} , \\
        h \from r' \to s 
    \end{array} \Bigg\} \Bigg/ \sim, \]
    where $\sim$ is generated by identifications $(kh, g) \sim (k, hg)$ for any $k \in R$ and $h, g \in R_-$ such that $g \neq \id$ and the pairs $kh$ and $hg$ are composable.
    Let $S$ denote this set.

    Define a function $\Phi \from S \to \{ f \from r \to s \mid f_- \neq \id \}$ by
    \[ \Phi(h, g) := hg. \]
    Note that $(hg)_- \neq \id$ since $g$ strictly lowers the degree, hence $\Phi$ takes values in the codomain subset.
    This map is well-defined by associativity of composition.

    We claim this map is a bijection, with inverse given by
    \[ \Phi^{-1}(f) := (f_+, f_-). \]
    The inverse takes values in the set $S$ (i.e.\ $f_- \in R_- \setminus \{ \id \}$) by assumption.
    The equality $\Phi \circ \Phi^{-1} = \id$ is clear.
    To show that $\Phi^{-1} \circ \Phi = \id$, fix a pair $(h, g) \in S$ and factor $h$ as $h = h_+ h_-$.
    Since $g \in R_-$, it follows that $(hg)_- = h_- g$ and $(hg)_+ = h_+$.
    With this, we compute
    \begin{align*}
        \Phi^{-1}(\Phi(h, g)) &= \Phi^{-1}(hg) \\
        &= ((hg)_+, (hg)_-) \\
        &= (h_+, h_- g) \\
        &\sim (h_+ h_-, g) \\
        &= (h, g).
    \end{align*}

    Regarding naturality in $s$, both the domain and codomain set of $\Phi$ form functors in the variable $s$ by post-composition. 
    From this, it is immediate that the naturality squares commute.
\end{proof}

\begin{lemma} \label{ez-lemma}
    For $X \in \arSet$ and $n \geq 0$, the square
    \[ \begin{tikzcd}
        \coprod\limits_{\substack{r \in R \\ \deg (r) = n}} L_r X \extimes \repr{r} \push{L_r X \extimes \bd \repr{r}} X_r \extimes \bd \repr{r} \ar[r] \ar[d, shorten <= -1.5em] & \Sk^{n-1} X \ar[d] \\
        \coprod\limits_{\substack{r \in R \\ \deg(r) = n}} X_r \extimes \repr{r} \ar[r] & \Sk^{n} X
    \end{tikzcd} \]
    is a pushout.
\end{lemma}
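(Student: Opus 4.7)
The plan is to verify the square pointwise in the first $A$-factor. For each $b \in A$, evaluation at $b$ in the first variable defines a cocontinuous functor $\aaSet \to \aSet$, since colimits in $\aaSet$ are computed pointwise. Thus a commutative square of bipresheaves is a pushout if and only if its evaluation at every $b$ is a pushout in $\aSet$. Moreover, Kan extension along $\id \times i_n$ affects only the second factor, so $(\Sk^n X)(b, -) = \Sk^n\bigl(X(b, -)\bigr)$.

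I then set $K := X(b, -) \in \aSet$ and rewrite the evaluated square purely in terms of $K$ and the sets $L_a X(b)$. The decisive input is elegance (\cref{elegant-reedy-cofibrant}): since $L_a X \cto X_a$ is a monomorphism, $L_a X(b) \hookrightarrow K_a$ is an injection whose image, by the Eilenberg--Zilber \cref{lem:ez}, coincides with the degenerate elements of $K$ at $a$. This yields a canonical decomposition $K_a = L_a X(b) \sqcup K_a^{\textnormal{n.d.}}$.

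Substituting, the top-left of the evaluated square unpacks as $\coprod_{\deg a = n} \bigl(L_a X(b) \cdot \repr{a}\bigr) \sqcup \bigl(K_a^{\textnormal{n.d.}} \cdot \bd \repr{a}\bigr)$, and the bottom-left as $\coprod_{\deg a = n} \bigl(L_a X(b) \cdot \repr{a}\bigr) \sqcup \bigl(K_a^{\textnormal{n.d.}} \cdot \repr{a}\bigr)$. The span between them splits into two disjoint pieces: the identity on $\coprod_{\deg a = n} L_a X(b) \cdot \repr{a}$, and the boundary inclusion $\coprod_{\deg a = n} K_a^{\textnormal{n.d.}} \cdot \bd \repr{a} \hookrightarrow \coprod_{\deg a = n} K_a^{\textnormal{n.d.}} \cdot \repr{a}$. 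The overall pushout decomposes as an iterated pushout along these two pieces: the identity piece contributes nothing new (pushout of a span one of whose legs is an identity yields the codomain of the other leg), while the second piece is exactly \cref{skeletal-po} applied to $K$, yielding $\Sk^n K$.

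The main obstacle will be the bookkeeping verifying that the attaching map $\coprod_{\deg a = n} L_a X(b) \cdot \repr{a} \to \Sk^{n-1} K$ is the expected one---namely, that each copy $\repr{a} \to K$ indexed by an element of $L_a X(b)$ factors through $\repr{a'} \to K$ for the $\tau \colon a \to a'$ in $A_-$ witnessing the degeneracy, so that the image lies in $\Sk^{n-1} K$. This is a routine Yoneda computation once the elegance decomposition is in hand. With it established, pushing out along the identity piece genuinely collapses, and the remaining pushout coincides with the one from \cref{skeletal-po}.
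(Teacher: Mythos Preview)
Your argument is correct, and it takes a genuinely different route from the paper's proof.

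The paper reduces to the case where $X$ is representable (using that both sides of the square are cocontinuous in $X$), then evaluates at an arbitrary $(c,d) \in A \times A$ and verifies the pushout property of sets by explicit element-chasing, using \cref{birepr-compute} to identify the corners. In particular it checks directly that a triple $(f,g,h)$ in the bottom-left with $g_- = \id$ and $h_+ = \id$ produces an element of $\Sk^n \repr{\bi{b}} \setminus \Sk^{n-1}\repr{\bi{b}}$.

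Your approach instead evaluates only in the first $A$-variable, keeping the second variable as a presheaf, and then recognizes the resulting square as an enlargement of the single-variable skeletal pushout of \cref{skeletal-po} by a redundant summand. The key move is the elegance decomposition $K_a \cong (L_a X)(b) \sqcup K_a^{\mathrm{n.d.}}$, which lets you peel off the $L_a X(b) \cdot \repr{a}$ summand as an identity leg; the remaining span is literally that of \cref{skeletal-po}. This is cleaner: it reuses \cref{skeletal-po} rather than reproving it inside the bipresheaf setting, and it avoids the representable reduction and the set-level computation of \cref{birepr-compute} entirely. The paper's approach, by contrast, is more self-contained and makes the combinatorics of the Reedy factorization completely explicit, which has some expository value. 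Both are short; yours is the more structural of the two.
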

\begin{proof}
    As every presheaf is a colimit of representable presheaves and colimits commute with colimits (in particular, $L_r$ commutes with colimits), it suffices to assume $X$ is a representable presheaf over some $\bi{bs} \in A \times R$.
    Instantiating this diagram at $\bi{at} \in A \times R$, it suffices to show the diagram
    \[ \begin{tikzcd}
        \coprod\limits_{\substack{r \in R \\ \deg (r) = n}} (L_r \repr{\bi{bs}} \extimes \repr{r} \push{L_r \repr{\bi{bs}} \extimes \bd \repr{r}} \repr{\bi{bs}}_r \extimes \bd \repr{r})_{a,t} \ar[r] \ar[d, hook, shorten <= -1.3em] & (\Sk^{n-1} \repr{\bi{bs}})_{a,t} \ar[d, hook] \\
        \coprod\limits_{\substack{r \in R \\ \deg(r) = n}} (\repr {\bi{bs}}_r \extimes \repr{r})_{a,t} \ar[r] & (\Sk^{n} \repr{\bi{bs}})_{a,t}
    \end{tikzcd} \]
    is a pushout of sets.

    For $r \in R$ such that $\deg(r) = n$, the top left set in the square may be explicitly described as
    \[ \coprod\limits_{\substack{r \in R \\ \deg(r) = n}} A(a, b) \times \{ (g \from r \to s, h \from t \to r) \mid g_{-} \neq \id \text{ or } h_{+} \neq \id \}, \]
    since
    \begin{align*}
        (L_r \repr{\bi{bs}} \extimes \repr{r})_{a,t} &= (L_r \repr{\bi{bs}})_a \extimes (\repr{r})_t \\
        &\cong A(a, b) \times \{ g \from r \to s \mid g_{-} \neq \id \} \times A(t, r)
    \end{align*}
    by \cref{birepr-compute}, and
    \begin{align*} 
        (\repr{\bi{bs}}_r \extimes \bd \repr{r})_{a,t} &= (\repr{\bi{bs}}_r)_a \times (\bd \repr{r})_d \\
        &= A(a, b) \times R(r, s) \times \{ h \from t \to r \mid h_{+} \neq \id \}.
    \end{align*}
    by \cref{repr-skeleton-description}.
    The top map
    in the square sends a tuple $(r, f, g, h)$ to the pair $(f, gh) \in \repr{\bi{bs}}_{a,t}$, which is an element of $(\Sk^{n-1} \repr{\bi{bs}})_{a,t}$ since $gh$ factors through some $r' \in R$ such that $\deg(r') < n$ (since either $g$ or $h$ factors).
    The bottom left set may be written as
    \[ \coprod\limits_{\substack{r \in R \\ \deg(r) = n}} (\repr{\bi{bs}}_r \extimes \repr{r})_{a,t} = \coprod\limits_{\substack{r \in R \\ \deg(r) = n}} A(a, b) \times R(r, s) \times R(t, r) \]
    and the bottom map sends a tuple $(r, f, g, h)$ to the pair $(f, gh) \in (\Sk^n \repr{\bi{bs}})_{a,t}$.
    
    Showing this square is a pushout, fix a set $S$ and a commutative square
    \[ \begin{tikzcd}
        \coprod\limits_{\substack{r \in R \\ \deg (r) = n}} (L_r \repr{\bi{bs}} \extimes \repr{r} \push{L_r \repr{\bi{bs}} \extimes \bd \repr{r}} \repr{\bi{bs}}_r \extimes \bd \repr{r})_{a,t} \ar[r] \ar[d, hook, shorten <= -1.35em] & (\Sk^{n-1} \repr{\bi{bs}})_{a,t} \ar[d, hook, "\Phi"] \\
        \coprod\limits_{\substack{r \in R \\ \deg(r) = n}} (\repr {\bi{bs}}_r \extimes \repr{r})_{a,t} \ar[r, "\Psi"] & S
    \end{tikzcd} \]
    Define a map $\Omega \from (\Sk^n \repr{\bi{bs}})_{a, t} \to S$ as follows: for $(f \from a \to b, \varphi \from t \to s) \in \Sk^n \repr{\bi{bs}}_{a, t}$, factor $\varphi$ as in the diagram
    \[ \begin{tikzcd}
        t \ar[rr, "\varphi"] \ar[rd, "\varphi_-"'] & {} & s \\
        {} & r \ar[ru, "\varphi_+"'] & {}
    \end{tikzcd} \]
    By \cref{repr-skeleton-description}, we have that $\deg(r) \leq n$.
    If $\deg(r) \neq n$ then $(f, \varphi)$ is in the image of the inclusion $(\Sk^{n-1} \repr{\bi{bs}})_{a, t} \ito (\Sk^n \repr{\bi{bs}})_{a, t}$.
    In this case, we define $\Omega$ by
    \[ \Omega(f, \varphi) := \Phi(f, \varphi). \]
    Otherwise, we have that $\deg(r) = n$, hence $(r, f, \varphi_+, \varphi_-)$ is an element of the bottom left set.
    Thus, we may define $\Omega$ by
    \[ \Omega(f, \varphi) := \Psi(r, f, \varphi_+, \varphi_-). \]

    It remains to show $\Omega$ is the unique map making the diagram
    \[ \begin{tikzcd}
        \coprod\limits_{\substack{r \in R \\ \deg(r) = n}} (\repr{\bi{bs}}_r \extimes \repr{r})_{a, t} \ar[r] \ar[rd, "\Psi"{pos=0.2}, shorten <= -2.8em] & (\Sk^n \bi{\repr{bs}})_{a, t} \ar[d, "\Omega"{pos=0.46, description}, dotted] & (\Sk^{n-1} \bi{\repr{bs}})_{a, t} \ar[l, hook'] \ar[ld, "\Phi"'{pos=0.52}] \\
        {} & S & {}
    \end{tikzcd} \]
    commute.    
    Commutativity of the right triangle follows by construction.
    For the left triangle, fix a tuple $(r, f, g, h)$.
    If either $g_{-} \neq \id$ or $h_+ \neq \id$ then commutativity follows from commmutativity of the starting square and the right triangle.
    Otherwise, it must be that $g \in R_{+}$ and $h \in R_{-}$. 
    By uniqueness of factorizations, we have that $(gh)_+ = g$ and $(gh)_- = h$, thus $\Omega(f, gh) = \Psi(r, f, g, h)$ as desired.
    If $\Omega' \from (\Sk^n \repr{\bi{bs}})_{a, t} \to S$ also makes the diagram commute then, for $(f, \varphi) \in (\Sk^n \repr{\bi{bs}})_{a, t}$, we factor $\varphi$ through some object $r \in R$ as $\varphi = \varphi_+ \varphi_-$ as before.
    If $\deg(r) < n$ then
    \[ \Omega(f, \varphi) = \Phi(f, \varphi) = \Omega'(f, \varphi). \]
    Otherwise, if $\deg(r) = n$ then
    \[ \Omega(f, \varphi) = \Psi(r, f, \varphi_+, \varphi_-) = \Omega'(f, \varphi_+ \varphi_-) = \Omega'(f, \varphi). \qedhere \]
\end{proof}

The following two lemmas (alongside \cref{reedy-cofib-colim-weq}) encapsulate the role of Reedy cofibrancy in the proof of the Generalized Diagonal Lemma.
\begin{lemma} \label{reedy-cofib-po-map-is-cofib}
    If $X \from R^\op \to \aSet$ is Reedy cofibrant then
        for $n \geq 0$ and $r \in R$ with $\deg(r) = n$, the maps
        \[ L_r X \extimes \repr{r} \cup_{L_r X \extimes \bd \repr{r}} X_r \extimes \bd \repr{r} \to X_r \extimes \repr{r} \]
        and
        \[ \Sk^{n-1} X \to \Sk^{n} X \]
        are monomorphisms.
\end{lemma}
\begin{proof}
    For the first map, we instantiate at $\bi{at} \in A \times R$ and show that
    \[ (L_r X \extimes \repr{r} \cup_{L_r X \extimes \bd \repr{r}} X_r \extimes \bd \repr{r})_{a, t} \to (X_r \extimes \repr{r})_{a, t} \]
    is injective.
    Using that pushouts of presheaves commute with evaluation, this map is induced via universal property from the commutative square
    \[ \begin{tikzcd}
        (L_r X)_a \times (\bd \repr{r})_{t} \ar[r] \ar[d] & (L_r X)_a \times (\repr{r})_{t} \ar[d] \\
        X_{a, r} \times (\bd \repr{r})_{t} \ar[r] & X_{a, r} \times (\repr{r})_{t} 
    \end{tikzcd} \]
    The map $(L_r X)_a \to X_{a, r}$ is injective since $X$ is Reedy cofibrant (\cref{elegant-reedy-cofibrant}).
    The map $\bd \repr{r} \to \repr{r}$ is injective by \cref{repr-skeleton-description}.
    The desired map is the pushout-product of two injections, hence is injective.

    The second map is a monomorphism by \cref{ez-lemma}, since it is a pushout (in $\arSet$) of the first map.
\end{proof}
\begin{lemma} \label{reedy-cofib-latching-diagram}
    If $X \from R^\op \to \aSet$ is Reedy cofibrant then
    the diagram
    \[ \bd (r \slice R_-)^\op \to R^\op \xrightarrow{X} \aSet \]
    is Reedy cofibrant.
\end{lemma}
\begin{proof}
    The latching category of an object $\sigma \from r \to s$ in $\bd(r \slice R_-)$ is isomorphic to the latching category of $s \in R$ by \cite[pg.~102, before Def.~9.1.3]{radulescu-banu}.
    Since $X$ is Reedy cofbrant, the map $L_s X \to X_s$ is a monomorphism for any $s$, therefore the map from the latching category of $\sigma \from r \to s$ to $X_{\cod \sigma}$ is a monomorphism.
\end{proof}

We may now prove the Generalized Diagonal Lemma.
\begin{theorem}[Generalized Diagonal Lemma] \label{thm:gen-diag-lem}
    Let $f \from X \to Y$ be a morphism in $\arSet$ between Reedy cofibrant diagrams such that $f_r \from X_r \to Y_r$ is a weak equivalence in $\aSet$ for all $r \in R$.
    Then, $\diag f \from \diag X \to \diag Y$ is a weak equivalence.
\end{theorem}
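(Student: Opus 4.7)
The strategy is induction on the skeletal filtration, applying the gluing lemma for cofibration categories to $\diag$ of the pushout from \cref{ez-lemma}. Since $\diag$ is cocontinuous, \cref{skeletal-induction} gives $\diag X \iso \colim_n \diag \Sk^n X$ and similarly for $Y$; assuming the resulting $\nat$-indexed sequences in $\aSet$ are Reedy cofibrant (see the obstacle below), \cref{reedy-cofib-colim-weq} reduces the theorem to showing that $\diag \Sk^n f$ is a weak equivalence for every $n \geq -1$. The base case $n = -1$ is immediate since both skeleta are empty.

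For the inductive step, apply $\diag$ to the pushout of \cref{ez-lemma}: by cocontinuity this yields a pushout in $\aSet$ whose top-left corner is $\coprod_{\deg a = n} \bigl( L_a X \otimes \repr{a} \cup_{L_a X \otimes \bd \repr{a}} X_a \otimes \bd \repr{a} \bigr)$, bottom-left is $\coprod_{\deg a = n} X_a \otimes \repr{a}$, top-right is $\diag \Sk^{n-1} X$, and bottom-right is $\diag \Sk^n X$; and analogously for $Y$. The gluing lemma applied to the induced map of pushouts requires a weak equivalence on each of the three non-pushout corners. The top-right is the inductive hypothesis; the bottom-left holds because $\otimes = \diag \circ \extimes$ preserves weak equivalences in each variable (\cref{def:diag-fun}); and the top-left, itself a pushout of the form $L_a X \otimes \repr{a} \leftarrow L_a X \otimes \bd\repr{a} \to X_a \otimes \bd\repr{a}$, succumbs to a second application of the gluing lemma. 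These checks reduce to $X_a \otimes \bd\repr{a} \to Y_a \otimes \bd\repr{a}$ (\cref{def:diag-fun} again) and the claim that $L_a X \to L_a Y$ is a weak equivalence.

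This remaining claim follows from the standard fact that latching functors preserve weak equivalences between Reedy cofibrant diagrams: by \cref{elegant-reedy-cofibrant}, the diagrams $X, Y \from A^\op \to \aSet$ are Reedy cofibrant, so the conclusion is an instance of \cref{reedy-cofib-colim-weq} applied to the colimit expressing $L_a$.

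The main obstacle will be the cofibrancy needed for each invocation of the gluing lemma, as well as for Reedy cofibrancy of the skeletal sequence $\diag \Sk^\bullet X$: in $\aaSet$ the relevant ``pushout along'' maps are monomorphisms, being pushout-products of the monos $\bd\repr{a} \cto \repr{a}$ and $L_a X \cto X_a$, but we need them to remain monomorphisms after applying $\diag$. This is automatic whenever $\diag$ preserves monomorphisms---as is the case in all examples of \cref{sec:examples}, where $\diag$ is typically also a right adjoint arising from precomposition with a functor $A \to A \times A$---and otherwise must be verified case by case.
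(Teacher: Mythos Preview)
Your argument is essentially identical to the paper's: skeletal induction via \cref{skeletal-induction}, the pushout of \cref{ez-lemma}, two nested applications of the gluing lemma, and the reduction to $L_a X \to L_a Y$ being a weak equivalence via \cref{elegant-reedy-cofibrant} and \cref{reedy-cofib-colim-weq}.

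The one point of divergence is your final paragraph. You flag, correctly, that the gluing lemma and the Reedy cofibrancy of the skeletal tower require the relevant maps in $\aSet$ to be cofibrations, i.e., monomorphisms; and that \cref{def:diag-fun} does not literally demand that $\diag$ preserve monomorphisms. The paper's proof does not isolate this point: it simply draws the post-$\diag$ vertical maps as monomorphisms and proceeds. So your proposal is not missing anything relative to the paper; if anything, you have made explicit a hypothesis the paper uses tacitly. In practice this is harmless, since every example in \cref{sec:examples} arises as a left Kan extension of a functor $A \times A \to \aSet$ along the Yoneda embedding, and such extensions preserve monomorphisms because $A \times A$ is EZ; but you are right that, as stated, \cref{def:diag-fun} alone does not supply it.
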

\begin{proof}
    The maps $\Sk^n X \to \Sk^{n+1} X$ and $\Sk^n Y \to \Sk^{n+1} Y$ are monomorphisms by \cref{reedy-cofib-po-map-is-cofib}.
    As $\diag$ preserves colimits and monomorphisms, the diagrams
    \[ \diag \Sk^{-1} X \to \diag \Sk^0 X \to \diag \Sk^1 X \to \ldots \to \diag X \]
    and
    \[ \diag \Sk^{-1} Y \to \diag \Sk^0 Y \to \diag \Sk^1 Y \to \ldots \to \diag Y \]
    are colimit diagrams valued in monomorphisms.
    Any diagram $\mathbb{N} \to \aSet$ taking values in monomorphisms is Reedy cofibrant, so by \cref{reedy-cofib-colim-weq},
    it suffices to show $\diag \Sk^n f \from \diag \Sk^n X \to \diag \Sk^n Y$ is a weak equivalence for $n \geq -1$.
    For $n = -1$, this is immediate. 
    For $n = 0$, this follows by assumption.

    By induction, fix $n > 0$ and suppose $\diag \Sk^{n-1} f \from \diag \Sk^{n-1} X \to \diag \Sk^{n-1} Y$ is a weak equivalence.
    By \cref{ez-lemma}, the front and back squares in
    \[ \begin{tikzcd}[cramped, sep = tiny]
        \coprod\limits_{\substack{r \in R \\ \deg(r) = n}} L_r X \extimes \repr{r} \push{L_r X \extimes \bd \repr{r}} X_r \extimes \bd \repr{r} \ar[rr] \ar[dd, hook, shorten <= -1.5em] \ar[rd, shorten <= -3em, shorten >= 0.5em] & {} & \Sk^{n-1} X \ar[dd, hook] \ar[rd] & {} \\
        {} & \coprod\limits_{\substack{r \in R \\ \deg(r) = n}} L_r Y \extimes \repr{r} \push{L_r Y \extimes \bd \repr{r}} Y_r \extimes \bd \repr{r} \ar[rr, crossing over] & {} & \Sk^{n-1} Y \ar[dd, hook] \\
        \coprod\limits_{\substack{r \in R \\ \deg(r) = n}} X_r \extimes \repr{r} \ar[rr] \ar[rd, shorten >= -1em] & {} & \Sk^n X \ar[rd] & {} \\
        {} & \coprod\limits_{\substack{r \in R \\ \deg(r) = n}} Y_r \extimes \repr{r} \ar[rr] \ar[from=uu, hook, crossing over, shorten <= -1.5em] & {} & \Sk^n Y 
    \end{tikzcd} \]
    are pushouts.
    Applying $\diag$, the front and back squares in
    \[ \begin{tikzcd}[cramped, sep = tiny]
        \coprod\limits_{\substack{r \in R \\ \deg(r) = n}} L_r X \otimes \repr{r} \push{L_r X \otimes \bd \repr{r}} X_r \otimes \bd \repr{r} \ar[rr] \ar[dd, hook, shorten <= -1.5em] \ar[rd, shorten <= -3em, shorten >= 0.5em] & {} & \diag \Sk^{n-1} X \ar[dd, hook] \ar[rd] & {} \\
        {} & \coprod\limits_{\substack{r \in R \\ \deg(r) = n}} L_r Y \otimes \repr{r} \push{L_r Y \otimes \bd \repr{r}} Y_r \otimes \bd \repr{r} \ar[rr, crossing over] & {} & \diag \Sk^{n-1} Y \ar[dd, hook] \\
        \coprod\limits_{\substack{r \in R \\ \deg(r) = n}} X_r \otimes \repr{r} \ar[rr] \ar[rd, shorten >= -1em] & {} & \diag \Sk^n X \ar[rd] & {} \\
        {} & \coprod\limits_{\substack{r \in R \\ \deg(r) = n}} Y_r \otimes \repr{r} \ar[rr] \ar[from=uu, hook, crossing over, shorten <= -1.5em] & {} & \diag \Sk^n Y 
    \end{tikzcd} \]
    are again pushouts as $\diag$ is cocontinuous.
    The map between the top right objects is a weak equivalence by the inductive hypothesis.
    The map between the bottom left objects is a weak equivalence since a coproduct of weak equivalences is a weak equivalence \cite[Lem.~1.6.3.(1)]{radulescu-banu}.
    The left maps in both the front and back squares are cofibrations (\cref{reedy-cofib-po-map-is-cofib}), so by the gluing lemma \cite[Lem.~1.4.1.(1b)]{radulescu-banu}, it suffices to show the map between the top left objects is a weak equivalence.
    This map is a coproduct of maps between pushouts which appear in the bottom right of
    \[ \begin{tikzcd}
        L_r X \otimes \bd \repr{r} \ar[rr] \ar[dd, hook] \ar[rd] & {} & X_r \otimes \bd \repr{r} \ar[dd, hook] \ar[rd] & {} \\
        {} & L_r Y \otimes \bd \repr{r} \ar[rr, crossing over] & {} & Y_r \otimes \bd \repr{r} \ar[dd, hook] \\
        L_r X \otimes \repr{r} \ar[rr] \ar[rd] & {} & L_r X \otimes \repr{r} \push{L_r X \otimes \bd \repr{r}} X_r \otimes \bd \repr{r} \ar[rd] & {} \\
        {} & L_r Y \otimes \repr{r} \ar[rr] \ar[from=uu, crossing over] & {} & L_r Y \otimes \repr{r} \push{L_r Y \otimes \bd \repr{r}} Y_r \otimes \bd \repr{r}
    \end{tikzcd} \]
    The map between the top right objects is a weak equivalence by assumption.
    Both the top and left maps in the front and back squares are cofibrations (by Reedy cofibrancy and by \cref{repr-skeleton-description}, respectively).
    By the gluing lemma, it suffices to show $L_r X \to L_r Y$ is a weak equivalence.


    The map $f$ induces a pointwise weak equivalence between diagrams
    \[ \begin{tikzcd}
    \bd(r \slice R_{-})^\op  
       \ar[r]
    &
    R^\op
      \ar[r, yshift = .75ex, "X"]
      \ar[r, yshift = -.75ex, "Y", swap]
    &
    \aSet .
    \end{tikzcd}
    \]
    These diagrams are Reedy cofibrant by \cref{reedy-cofib-latching-diagram}, hence by \cref{reedy-cofib-colim-weq}, the induced map between colimits $L_r X \to L_r Y$ is a weak equivalence.
\end{proof}
Using \cref{elegant-reedy-cofibrant}, we can rephrase the assumptions to obtain the following corollary.
\begin{corollary} \label{cor:}
    Suppose $R$ is an elegant Reedy category.
    Let $f \from X \to Y$ be a morphism in $\arSet$ such that $f_r \from X_r \to Y_r$ is a weak equivalence in $\aSet$ for all $r \in R$.
    Then, $\diag f \from \diag X \to \diag Y$ is a weak equivalence. \qed
\end{corollary}

  \section{Examples} \label{sec:examples}



We now give several examples of applications of \cref{thm:gen-diag-lem}.
Throughout this section, we still follow \cref{A-ez-aset-Cisinski}: $A$ and $R$ are Reedy categories, and $\aSet$ is considered with a Cisinski cofibration category structure.
Furthermore, in the examples presented below, $R$ will always be an elegant Reedy category, hence all $R$-presheaves are Reedy cofibrant.
 
If $\aSet$ carries a bicocontinuous (right) $\rSet$-action $\otimes \from \aSet \times \rSet \to \aSet$ then one might define a functor $\diag \from \arSet \to \aSet$ as a left Kan extension
\[
  \begin{tikzcd}[column sep=huge]
    A \times R 
      \ar[r, "\otimes"]
      \ar[d, hook]
    & \aSet 
    \\
    \arSet
      \ar[ru, bend right, "\diag", swap, near end]
    &
  \end{tikzcd}
\]

It is easy to see that the composite $\diag \circ \extimes$ defines an $\rSet$-action on $\aSet$ that agrees with the original action.
Analogously, if $\aSet$ carries a bicocontinuous left $\rSet$-action then we may define $\diag$ in a similar way to obtain a right diagonal functor.


\begin{corollary}
  Suppose $R$ is elegant and $\aSet$ is equipped with a (left or right) $\rSet$-action which preserves colimits in each variable, and weak equivalences in the $\aSet$ variable.

  If $f \colon X \to Y$ is a map in $\arSet$ such that $f_r \colon X_r \to Y_r$ is a weak equivalence for every $r \in R$, then $\diag f \colon \diag X \to \diag Y$ is a weak equivalence.
\end{corollary}

\begin{proof}
  The functor $\diag$ is a diagonal functor in the sense of \cref{def:diag-fun}, and hence we may apply \cref{thm:gen-diag-lem}.
\end{proof}

\begin{example} \label{ex:simplicial-objects-in-rSet}
  If $\aSet$ is a simplicial model category whose cofibrations are the monomorphisms then the tensoring of weak equivalences is a weak equivalence (by the ``pushout-product axiom'').
  As $\Delta$ is elegant, any levelwise weak equivalence $f_\bullet \from X_\bullet \to Y_\bullet$ between simplicial objects in $\aSet$ induces a weak equivalence $\diag f_\bullet \from \diag X_\bullet \to \diag Y_\bullet$.
\end{example}
Reversing the role of the simplex category in \cref{ex:simplicial-objects-in-rSet} yields another example.
\begin{example}
  Both the Kan--Quillen and Joyal model structures on $\sSet$ are Cisinski model structures.
  Thus, if $R$ is elegant and $\sSet$ has a bicocontinuous tensoring $\otimes$ by $\rSet$ satisfying the pushout-product axiom then any levelwise weak equivalence $f \from X \to Y$ between $R$-diagrams valued in $\sSet$ induces a weak equivalence $\diag f \from \diag X \to \diag Y$ in $\sSet$.
\end{example}

When $A = R$, an $\rSet$ action on $\aSet$ is exactly a monoidal product on $\aSet$.
Many natural applications of the Generalized Diagonal Lemma arise in this way.

\begin{example} \label{ex:cisinski-monoidal-diag-weq}
  If $\aSet$ is a Cisinski monoidal model category (i.e., a monoidal model category whose cofibrations are the monomorphisms), then the monoidal product of weak equivalences is again a weak equivalence.
  Thus in any such case a levelwise weak equivalence $f \from X \to Y$ induces a weak equivalence $\diag f \colon \diag X \to \diag Y$.
\end{example}

\begin{example}[Geometric product of cubical sets]
  Consider the box category $\Box$ with zero, one, or two connections, but no symmetries, reversals, or diagonals.
  This category carries a monoidal structure given by $([1]^m, [1]^n) \mapsto [1]^{m+n}$, giving rise to the functor $\diag \colon \ccSet \to \cSet$ whose composite $\otimes := \diag \circ \extimes$ is the \emph{geometric product} on cubical sets.
  Since cubical sets form a monoidal model category (the Grothendieck model structure with the geometric product), the product of weak equivalences is again a weak equivalence.
  Hence any map $f \colon X \to Y$ of bicubical sets that is a levelwise weak equivalence induces a weak equivalence $\diag f \colon \diag X \to \diag Y$.
\end{example}

\begin{example}[Join of simplicial sets]
  Consider the promonoidal structure $\Delta \times \Delta \to \sSet$ on the simplex category given by $([m], [n]) \mapsto \simp{m+n+1}$.
  The resulting diagonal functor $\diag[\join] \colon \ssSet \to \sSet$ composed with the external product yields the \emph{join} structure on simplicial sets, which preserves both weak homotopy equivalences and weak categorical equivalences.
  Thus any map $f \colon X \to Y$ of bisimplicial sets that is a levelwise weak equivalence induces a weak equivalence $\diag[\join] f \colon \diag[\join] X \to \diag[\join] Y$.
\end{example}

Another class of promonoidal structures $A \times A \to \aSet$ comes from the categorical product via the functor $(a, b) \mapsto \repr{a} \times \repr{b}$.
Put differently, given an elegant Reedy category $A$, we have the canonical categorical diagonal inclusion $(\id, \id) \colon A \to A \times A$ sending $a$ to $(a, a)$, which induces an adjoint triple
\[
  \begin{tikzcd}[column sep=huge]
    \aaSet 
      \ar[r, bend left]
      \ar[r, bend right]
    & \aSet 
      \ar[l,"\diagonal" description]
  \end{tikzcd}
\]
where the middle functor $\diagonal \colon \aaSet \to \aSet$ is given by precomposition with the inclusion $A \to A \times A$.

\begin{corollary} \label{thm:diag-lem-cat-prod}
  Suppose the product $w \times w'$ of weak equivalences in $\aSet$ is a weak equivalence.
 
  If $f \colon X \to Y$ is a map in $\aaSet$ such that $f_a \colon X_a \to Y_a$ is a weak equivalence for every $a \in A$, then $\diagonal f \colon \diagonal X \to \diagonal Y$ is a weak equivalence. \qed
\end{corollary}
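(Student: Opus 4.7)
The plan is to deduce this corollary from the Generalized Diagonal Lemma (\cref{thm:gen-diag-lem}) by verifying that $\diagonal \colon \aaSet \to \aSet$ is a diagonal functor in the sense of \cref{def:diag-fun}. Two things must be checked: (i) cocontinuity of $\diagonal$, and (ii) that the composite $\diagonal \circ \extimes \colon \aSet \times \aSet \to \aSet$ preserves weak equivalences in each variable separately.

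For (i), I would note that $\diagonal$ is the middle functor in the adjoint triple induced by the categorical diagonal inclusion $(\id, \id) \colon A \to A \times A$, as indicated in the paragraph preceding the statement. In particular, $\diagonal$ is a left adjoint and therefore preserves all (small) colimits. For (ii), I would compute the composite pointwise: for $K, L \in \aSet$ and $a \in A$,
\[ (\diagonal(K \extimes L))_a = (K \extimes L)_{a,a} = K_a \times L_a, \]
which identifies $\diagonal \circ \extimes$ with the levelwise categorical product of presheaves. Since the product of weak equivalences is assumed to be a weak equivalence, this composite preserves weak equivalences in each variable.

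With both conditions in place, $\diagonal$ satisfies \cref{def:diag-fun}, and the conclusion follows immediately from \cref{thm:gen-diag-lem} applied to $f \colon X \to Y$. I do not anticipate any real obstacle; the only point to be careful about is correctly identifying the composite $\diagonal \circ \extimes$ with the categorical product, but this is an immediate unwinding of definitions since $\diagonal$ is precomposition with $(\id,\id) \colon A \to A \times A$ and $\extimes$ is external product.
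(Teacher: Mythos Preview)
Your proposal is correct and matches the paper's intended argument exactly: the paper gives no explicit proof (only a \qed), relying on the preceding paragraph establishing that $\diagonal$ sits in an adjoint triple (hence is cocontinuous) and that $\diagonal \circ \extimes$ is the categorical product, so that \cref{thm:gen-diag-lem} applies directly. Your write-up simply spells out this implicit reasoning, and your identification of $(\diagonal(K \extimes L))_a = K_a \times L_a$ is the only computation needed.
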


\begin{example}[Joyal model structure on simplicial sets]
  The Joyal model structure on simplicial sets is monoidal with respect to the categorical product.
  Hence if $f \colon X \to Y$ is a bisimplicial map such that $f_n \colon X_n \to Y_n$ is a weak categorical equivalence for every $n \in \mathbb{N}$, then $\diagonal f \colon \diagonal X \to \diagonal Y$ is a weak categorical equivalence.
\end{example}

For instance, if $A$ is a strict test category \cite[Def.~1.6.7]{maltsiniotis:book}, then the weak equivalences of $\aSet$ are closed under finite products.
This gives:

\begin{corollary}
  Let $A$ be an elegant Reedy category that is also a strict test category and let $\aSet$ be equipped with the corresponding canonical model structure.

  If $f \colon X \to Y$ is a map in $\aaSet$ such that $f_a \colon X_a \to Y_a$ is a weak equivalence for every $a \in A$, then $\diagonal f \colon \diagonal X \to \diagonal Y$ is a weak equivalence. \qed
\end{corollary}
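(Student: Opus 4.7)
The plan is to reduce this statement directly to \cref{thm:diag-lem-cat-prod}. The only hypothesis of that corollary that needs to be checked is that the categorical product of two weak equivalences in $\aSet$ is again a weak equivalence.

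First, I would recall the setup for a strict test category $A$: the weak equivalences of the canonical model structure on $\aSet$ are precisely those maps whose geometric realization is a weak homotopy equivalence of spaces (equivalently, those maps sent to weak homotopy equivalences by the functor $\aSet \to \sSet$ associated to $A$). The defining feature of a \emph{strict} test category, as opposed to a merely test category, is exactly that the canonical functor to simplicial sets (or spaces) sends finite categorical products in $\aSet$ to homotopy products up to weak equivalence; equivalently, the class of weak equivalences in $\aSet$ is closed under finite products. This is the content of \cite[Def.~1.6.7]{maltsiniotis:book} (together with the construction of the canonical model structure), so I would simply cite it.

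With that hypothesis in hand, the map $f \times f \from X \times X \to Y \times Y$ being a weak equivalence whenever $f$ is, implies that for any pair of weak equivalences $w, w'$ in $\aSet$, the product $w \times w'$ is a weak equivalence, by the usual two-out-of-three argument applied to the factorization $w \times w' = (w \times \id) \circ (\id \times w')$. Thus \cref{thm:diag-lem-cat-prod} applies and gives that $\diagonal f$ is a weak equivalence for every levelwise weak equivalence $f \from X \to Y$ in $\aaSet$.

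The only conceptual point in the proof is the identification of the "strict" condition with closure of weak equivalences under finite products; once that is invoked, the result is immediate from the previously established \cref{thm:diag-lem-cat-prod}, so there is no genuine obstacle — the statement is essentially a packaging of the test-categorical literature together with our generalized diagonal lemma.
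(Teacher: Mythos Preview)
Your approach is exactly the paper's: the corollary is stated with a \qed because the sentence preceding it already notes that for a strict test category the weak equivalences of $\aSet$ are closed under finite products, so \cref{thm:diag-lem-cat-prod} applies immediately. Your second paragraph is slightly redundant --- once you have cited that closure under finite products holds, there is no need to re-derive $w \times w'$ being a weak equivalence via a two-out-of-three argument --- but the argument is correct and matches the paper's intended one-line justification.
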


\begin{example}[Kan--Quillen model structure on simplicial sets]
  The simplex category $\Delta$ is a strict test category \cite[Prop.~1.6.14]{maltsiniotis:book} and the test category model structure coincides with the Kan--Quillen model structure thereon, which allows us to recover the usual Diagonal Lemma.
  (Of course, there are many simpler ways of showing that weak homotopy equivalences of simplicial sets are closed under products.)
\end{example}

\begin{example}[Cubical sets]
  The box category $\Box$ with one or two connections (but again, no symmetries, reversals, or diagonals) is a strict test category \cite[Prop.~4.3]{maltsiniotis:cubes}, and hence any map of bicubical sets $f \colon X \to Y$ that is a levelwise equivalence induces a weak equivalence $\diagonal f \colon \diagonal X \to \diagonal Y$.
  
  Note however that this would not be true in the minimal box category, i.e., without connections.
  Since the categorical product $\Box^1 \times \Box^1$ has the homotopy type of $S^2 \vee S^1$ (cf. \cite[\S6]{maltsiniotis:cubes}), the product of the weak equivalence $\Box^1 \to \Box^0$ with itself is not a weak equivalence.
\end{example}



\renewcommand{\thesection}{\Alph{section}}



  

\bibliographystyle{amsalphaurlmod}
\bibliography{all-refs.bib}

\end{document}